\newcommand{\B}{\mathcal{B}}
\newcommand{\T}{\mathcal{T}}
\newcommand{\E}{\mathbb{E}}
\newcommand{\nut}{\nu^{(T)}}
\newcommand{\nup}{\nu^{(P)}}
\newif\ifdetails
\newcommand{\DETAIL}[1]%
{\ifdetails\par\fbox{\begin{minipage}{0.9\linewidth}\textit{Detail:}
      #1\end{minipage}}\par\fi}
\newcommand{\TODO}[1]%
{\ifdetails\par\fbox{\begin{minipage}{0.9\linewidth}\textbf{TODO:}
      #1\end{minipage}}\par\fi}
\newtheorem{lemma}{Lemma}
\newtheorem{theorem}[lemma]{Theorem}
\newtheorem{corollary}[lemma]{Corollary}
\theoremstyle{remark}
\newtheorem{remark}{Remark}
\newcommand{\s}{{\mathfrak S}}
\title{The shape of random tanglegrams}
\author{Matja\v{z} Konvalinka}
\address{Matja\v{z} Konvalinka \\ Department of Mathematics, University of Ljubljana, and Institute for Mathematics, Physics and Mechanics \\ Jadranska 19 \\ Ljubljana \\ Slovenia}
\email{matjaz.konvalinka@fmf.uni-lj.si}
\thanks{Partially supported by Research Program Z1-5434 of the Slovenian Research Agency.}
\author{Stephan Wagner}
\address{Stephan Wagner\\
        Department of Mathematical Sciences\\
        Stellenbosch University\\
        Private Bag X1\\
        Matieland 7602\\
        South Africa}
\email{swagner@sun.ac.za}
\thanks{This material is based upon work supported financially by the National Research Foundation of South Africa under grant number 96236.}
\begin{document}

\begin{abstract}
A tanglegram consists of two binary rooted trees with the same number of leaves and a perfect matching between the leaves of the trees.
We show that the two halves of a random tanglegram essentially look like two independently chosen random plane binary trees. This fact is used to derive a number of results on the shape of random tanglegrams, including theorems on the number of cherries and generally occurrences of subtrees, the root branches, the number of automorphisms, and the height. For each of these, we obtain limiting probabilities or distributions. Finally, we investigate the number of matched cherries, for which the limiting distribution is identified as well.
\end{abstract}

\maketitle

\section{Introduction}

Tanglegrams are, intuitively, graphs obtained by taking two binary rooted trees with the same number of leaves (which is the \emph{size} of a tanglegram) and matching each leaf from the tree on the left with a unique leaf from the tree on the right. Furthermore, we consider two tanglegrams to be the same if we can get one from the other by an isomorphism that fixes the roots. For example, the following figure shows all 13 tanglegrams of size $4$.

\begin{figure}[h!]
\begin{center}
\begin{tikzpicture}[scale = 0.4]
\newcommand{\treeb}[3]{\coordinate (v1) at (#1,#3); \coordinate (v2) at (#1+#2,#3+0.5);\coordinate (v3) at (#1+2*#2,#3+1);\coordinate (v4) at (#1+3*#2,#3+1.5);\coordinate (v5) at (#1+3*#2,#3+0.5);\coordinate (v6) at (#1+3*#2,#3-0.5);\coordinate (v7) at (#1+3*#2,#3-1.5);\draw[fill] (v1) circle (.5ex);\draw[fill] (v4) circle (.5ex);\draw[fill] (v5) circle (.5ex);\draw[fill] (v6) circle (.5ex);\draw[fill] (v7) circle (.5ex);\draw (v1) -- (v4);\draw (v3) -- (v5);\draw (v2) -- (v6);\draw (v1) -- (v7);}
\newcommand{\treec}[3]{\coordinate (v1) at (#1,#3); \coordinate (v2) at (#1+2*#2,#3-1);\coordinate (v3) at (#1+2*#2,#3+1);\coordinate (v4) at (#1+3*#2,#3+1.5);\coordinate (v5) at (#1+3*#2,#3+0.5);\coordinate (v6) at (#1+3*#2,#3-0.5);\coordinate (v7) at (#1+3*#2,#3-1.5);\draw[fill] (v1) circle (.5ex);\draw[fill] (v4) circle (.5ex);\draw[fill] (v5) circle (.5ex);\draw[fill] (v6) circle (.5ex);\draw[fill] (v7) circle (.5ex);\draw (v1) -- (v4);\draw (v3) -- (v5);\draw (v2) -- (v6);\draw (v1) -- (v7);}
\newcommand{\tangleb}[7]{\treeb{#1}{#2}{#7} \treeb{#1+8*#2}{-#2}{#7} \draw[dashed] (#1 + 3*#2,#7+1.5) -- (#1 + 5*#2,#7+2.5-#3); \draw[dashed] (#1 + 3*#2,#7+0.5) -- (#1 + 5*#2,#7+2.5-#4); \draw[dashed] (#1 + 3*#2,#7-0.5) -- (#1 + 5*#2,#7+2.5-#5); \draw[dashed] (#1 + 3*#2,#7-1.5) -- (#1 + 5*#2,#7+2.5-#6);}
\newcommand{\tanglec}[7]{\treeb{#1}{#2}{#7} \treec{#1+8*#2}{-#2}{#7} \draw[dashed] (#1 + 3*#2,#7+1.5) -- (#1 + 5*#2,#7+2.5-#3); \draw[dashed] (#1 + 3*#2,#7+0.5) -- (#1 + 5*#2,#7+2.5-#4); \draw[dashed] (#1 + 3*#2,#7-0.5) -- (#1 + 5*#2,#7+2.5-#5); \draw[dashed] (#1 + 3*#2,#7-1.5) -- (#1 + 5*#2,#7+2.5-#6);}
\newcommand{\tangled}[7]{\treec{#1}{#2}{#7} \treeb{#1+8*#2}{-#2}{#7} \draw[dashed] (#1 + 3*#2,#7+1.5) -- (#1 + 5*#2,#7+2.5-#3); \draw[dashed] (#1 + 3*#2,#7+0.5) -- (#1 + 5*#2,#7+2.5-#4); \draw[dashed] (#1 + 3*#2,#7-0.5) -- (#1 + 5*#2,#7+2.5-#5); \draw[dashed] (#1 + 3*#2,#7-1.5) -- (#1 + 5*#2,#7+2.5-#6);}
\newcommand{\tanglee}[7]{\treec{#1}{#2}{#7} \treec{#1+8*#2}{-#2}{#7} \draw[dashed] (#1 + 3*#2,#7+1.5) -- (#1 + 5*#2,#7+2.5-#3); \draw[dashed] (#1 + 3*#2,#7+0.5) -- (#1 + 5*#2,#7+2.5-#4); \draw[dashed] (#1 + 3*#2,#7-0.5) -- (#1 + 5*#2,#7+2.5-#5); \draw[dashed] (#1 + 3*#2,#7-1.5) -- (#1 + 5*#2,#7+2.5-#6);}
\tangleb {0} {0.7} 1 2 3 4 {0}
\tangleb {7} {0.7} 1 2 4 3 {0}
\tangleb {14} {0.7} 1 3 2 4 {0}
\tangleb {21} {0.7} 1 3 4 2 {0}
\tangleb {28} {0.7} 1 4 2 3 {0}
\tangleb {0} {0.7} 1 4 3 2 {-5}
\tangleb {7} {0.7} 3 4 1 2 {-5}
\tanglec {14} {0.7} 1 2 3 4 {-5}
\tanglec {21} {0.7} 1 3 2 4 {-5}
\tangled {28} {0.7} 1 2 3 4 {-5}
\tangled {7} {0.7} 1 3 2 4 {-10}
\tanglee {14} {0.7} 1 2 3 4 {-10}
\tanglee {21} {0.7} 1 3 2 4 {-10}
\end{tikzpicture}
\end{center}
\caption{The $13$ tanglegrams of size $4$.}\label{fig:t4}
\end{figure}

Let us make this definition more precise. A \emph{plane binary tree} has one distinguished vertex assumed to be a common ancestor of all other vertices, and each vertex either has two children (left and right) or no children.  A vertex with no children is a \emph{leaf}, and a vertex with two children is an \emph{internal vertex}. It is well known that the number of plane binary trees with $n$ leaves is the Catalan number $\frac{1}{n} \binom{2n-2}{n-1}$, henceforth denoted by $C_n$.  The sequence starts with
$$1, 1, 2, 5, 14, 42, 132, 429, 1430, 4862, 16796, 58786, 208012,$$
see \cite[A000108]{oeis} and \cite[\S 6]{ec2} for more information.

Two plane binary trees with labeled leaves are said to be \emph{equivalent} if there is an isomorphism from one to the other as graphs mapping the root of
one to the root of the other. Let $\B_n$ be the set of inequivalent plane binary trees with $n\geq 1$ leaves. In the following, we will refer to the elements of $\B_n$ merely as \emph{binary trees} for simplicity. The sets $\B_n$ are enumerated by the Wedderburn-Etherington numbers, whose sequence starts
$$1, 1, 1, 2, 3, 6, 11, 23, 46, 98, 207, 451, 983,$$
see \cite[A001190]{oeis} for more information.

For each plane binary tree $T$, denote by $A(T)$ its \emph{automorphism group}, which can be interpreted as a subgroup of the permutation group of the set of leaves, i.e.~ as a subgroup of $\s_n$. Given a permutation $v \in \s_n$ along with two trees $T,S \in \B_n$, each with leaves labeled $1,\ldots, n$, we construct an \emph{ordered binary rooted tanglegram} (or \emph{tanglegram} for short) $(T,v,S)$ of size $n$ with $T$ as the left tree, $S$ as the right tree, by identifying leaf $i$ in $T$ with leaf $v(i)$ in $S$.  Furthermore, $(T,v,S)$ and $(T',v',S')$ are considered to represent the same tanglegram provided $T = T'$,\ $S = S'$ as trees and $v'=uvw$, where $u\in A(T)$ and $w \in A(S)$. In other words, a tanglegram is a double coset of the symmetric group $\s_n$ with respect to the double action of $A(T)$ on the left and $A(S)$ on the right, where $T,S \in \B_n$.
 
Let $\T_n$ be the set of all tanglegrams of size $n$, and let $t_n$ be the number of elements in the set $\T_n$. The sequence starts
$$1,1, 1, 2, 13, 114, 1509, 25595, 535753, 13305590, 382728552, 12515198465, 458621603279,$$
see \cite[A258620]{oeis} for more terms. Note that in Figure~\ref{fig:t4}, the dashed lines are not technically part of the graph, but this visualization allows us to give a planar drawing of the two trees.

Tanglegrams naturally arise in biology, in particular in the study of cospeciation and coevolution. For example, the tree on the left may represent the phylogeny of a host,
such as gopher, while the tree on the right may represent a parasite, such as louse \cite{Hafner1988-da}, \cite[page 71]{Page.2002}. For more information on tanglegrams in biology, see \cite{arniePaper}.

In computer science, the Tanglegram Layout Problem (TL, see~\cite{bukin.etal.2008}) is to find a
drawing of a tanglegram in the plane with the left and right trees
both given as planar embeddings with the smallest number of crossings
among (straight) edges matching the leaves of the left tree and the
right tree. The authors of~\cite{bukin.etal.2008} point out that tanglegrams occur in the analysis of software projects and clustering problems.

It was recently shown in \cite{billey15} that the number of tanglegrams of size $n$ is given by
\begin{equation}\label{eq:explicit}
t_n = \sum_{\lambda} z_{\lambda} \Big( \sum_{T \in \B_n} \frac{|A(T)_{\lambda}|}{|A(T)|} \Big)^2,
\end{equation}
where the sum is over all \emph{binary partitions} $\lambda$ of $n$, i.e.~ partitions whose parts are all powers of $2$, $z(\lambda) = \prod_j (2^j)^{m_j} m_j!$ ($m_j$ being the number of occurrences of $2^j$ in $\lambda$), and $A(T)_{\lambda}$ is the set of automorphisms of $T$ whose conjugacy class is $\lambda$.

Moreover, it was found that 

$$
\sum_{T \in \B_n} \frac{|A(T)_{\lambda}|}{|A(T)|} = \frac{\prod_{i \geq 2} (2(\lambda_i + \cdots + \lambda_{\ell(\lambda)}) - 1)}{z_{\lambda}}.
$$
Here $\lambda_1,\lambda_2,\ldots,\lambda_{\ell(\lambda)}$ are the parts of $\lambda$ written in weakly decreasing order. For example, the binary partitions of $n=4$ are $4$, $22$, $211$ and $1111$, so
 $$t_4 = \frac{1}{4} + \frac{3^2}{8} + \frac{3^2 \cdot 1^2}{4} + \frac{5^2 \cdot 3^2 \cdot 1^2}{24} = 13.$$
 The formula was used together with~\eqref{eq:explicit} to obtain an asymptotic formula for $t_n$:
\begin{equation}\label{eq:asymp}
\frac{t_n}{n!} = e^{1/8} \Bigg( \frac{\frac{1}{n} \binom{2n-2}{n-1}}{2^{n-1}} \Bigg)^2 \big( 1 + O(n^{-1})\big) = \frac{e^{1/8} 4^{n-1}}{\pi n^3} \big( 1 + O(n^{-1})\big).
\end{equation}
See \cite[Corollary 8]{billey15} for more details. 

Once the enumeration problem is solved, it is very natural to consider random tanglegrams and study their shape. An algorithm that generates tanglegrams uniformly at random was described in \cite{billey15}, and a number of questions in this regard were put forward. The aim of this paper is to answer these questions. In fact, we will obtain them as corollaries of a rather precise structure theorem stating that the two halves of a random tanglegram look essentially like two independently chosen random plane binary trees.

In order to make the similarity between tanglegrams and pairs of plane binary trees precise, let us first recall the concept of the total variation distance of probability measures: for two such measures $\pi_1,\pi_2$ defined on the same $\sigma$-algebra $\mathcal{F}$, one defines
$$d(\pi_1,\pi_2) = \sup_{S \in \mathcal{F}} |\pi_1(S) - \pi_2(S)|.$$
The two probability measures we are comparing are both defined on the set $\B_n^2$ of pairs of binary trees. The first measure $\nut_n$ is the measure induced by the uniform measure on random tanglegrams (the two components simply being the two halves of the tanglegram), the other one, denoted by $\nup_n$, is the measure obtained by choosing two plane binary trees uniformly and independently at random. Our main theorem reads as follows:

\begin{theorem}\label{thm:main}
The total variation distance $d(\nut_n,\nup_n)$ goes to $0$ as $n \to \infty$, specifically $d(\nut_n,\nup_n) = O(n^{-1/2})$. Moreover, there exist positive constants $M_1$ and $M_2$ such that we have
\begin{equation}\label{eq:upper_bound}
\nut_n(S) \leq M_1\nup_n(S) + O(n^{-1}) \quad \text{and}\quad \nup_n(S) \leq M_2\nut_n(S) + O(n^{-1})
\end{equation}
for every subset $S$ of $\B_n^2$.
\end{theorem}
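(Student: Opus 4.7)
The plan is to identify the likelihood ratio $R(T,S) := \nut_n(T,S)/\nup_n(T,S)$ in closed form. A Burnside-type counting argument applied to the double-coset description of tanglegrams shows that the number of tanglegrams whose tree pair is $(T,S) \in \B_n^2$ equals
\[
N(T,S) = \frac{1}{|A(T)|\,|A(S)|}\sum_\lambda z_\lambda\,|A(T)_\lambda|\,|A(S)_\lambda|,
\]
with the sum running over binary partitions $\lambda$ of $n$; summing over $(T,S)$ recovers \eqref{eq:explicit}. On the other hand, the identity $C_n = \sum_{T\in\B_n} 2^{n-1}/|A(T)|$ yields $\nup_n(T,S) = \frac{4^{n-1}}{C_n^2\,|A(T)|\,|A(S)|}$. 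Dividing, and using the asymptotic \eqref{eq:asymp} for $t_n$, we obtain the pointwise formula
\[
R(T,S) = \frac{W(T,S)}{\E_P[W]}, \qquad W(T,S) := \sum_\lambda z_\lambda\,|A(T)_\lambda|\,|A(S)_\lambda|,
\]
where $\E_P$ denotes expectation under $\nup_n$ and $\E_P[W] = 4^{n-1}t_n/C_n^2 = e^{1/8}\,n!\,(1+O(n^{-1}))$.

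With this in hand, the total variation bound reduces to a variance estimate: by Cauchy--Schwarz, $2\,d(\nut_n,\nup_n) = \E_P[|R-1|] \leq \sqrt{\mathrm{Var}_P(R)} = \sqrt{\mathrm{Var}_P(W)}/\E_P[W]$, so it suffices to show $\mathrm{Var}_P(W) = O(n^{-1})\,\E_P[W]^2$. Writing $X_\lambda(T) := |A(T)_\lambda|$ and $\alpha_\lambda := \E[X_\lambda]$ for a uniform random plane binary tree $T$, and exploiting the independence of $T,S$ under $\nup_n$, a routine expansion of the fluctuation $W-\E_P[W]$ into its linear and bilinear parts in $(T,S)$ gives
\[
\mathrm{Var}_P(W) = 2\sum_{\lambda,\mu} z_\lambda z_\mu \alpha_\lambda\alpha_\mu\,\mathrm{Cov}(X_\lambda,X_\mu) + \sum_{\lambda,\mu} z_\lambda z_\mu\,\mathrm{Cov}(X_\lambda,X_\mu)^2.
\]

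The hard part is controlling these covariances. Combinatorially, $X_\lambda(T)$ counts automorphisms of $T$ whose induced leaf permutation has cycle type $\lambda$; such an automorphism is determined by the choice, at certain internal nodes of $T$, to swap the two (necessarily isomorphic) sibling subtrees, so $X_\lambda$ can be rewritten as an enumeration of tuples of disjoint pairwise-isomorphic rooted subtrees of $T$ whose sizes are the parts of $\lambda$. The expectations and mixed second moments of these counts are accessible via the classical generating-function calculus for uniform random plane binary trees. The dominant contribution is from $\lambda=\mu=2\,1^{n-2}$, where $X_\lambda$ is the number of cherries of $T$; this has mean $\sim n/4$ and variance $\Theta(n)$, and a direct computation shows it contributes $\Theta((n!)^2/n)$ to $\mathrm{Var}_P(W)$, matching the target. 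The remaining work is book-keeping to verify that all other $(\lambda,\mu)$-contributions are at most of the same order, which is plausible because $z_\lambda\alpha_\lambda/n!$ decays geometrically in the total size of the non-identity parts of $\lambda$, mirroring the convergence of $\sum_\lambda z_\lambda\alpha_\lambda^2/n!$ to $e^{1/8}$.

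Finally, the one-sided inequalities \eqref{eq:upper_bound} follow from the pointwise identity $R=W/\E_P[W]$. The lower bound is immediate: since $W(T,S) \geq n!$ pointwise (from the $\lambda=1^n$ summand alone), $R \geq n!/\E_P[W] = e^{-1/8}(1+O(n^{-1}))$, which rearranges to $\nup_n(S) \leq e^{1/8}\,\nut_n(S) + O(n^{-1})$, giving $M_2 = e^{1/8}$. For the upper bound, fix any $M_1 > 1$; then for any $S \subseteq \B_n^2$,
\[
\nut_n(S) - M_1\,\nup_n(S) = \E_P\bigl[\mathbf{1}_S\,(R-M_1)\bigr] \leq \E_P\bigl[(R-M_1)_+\bigr] = \int_{M_1}^\infty \nup_n(R>u)\,du,
\]
and Chebyshev's inequality combined with the variance estimate gives $\nup_n(R>u) \leq \mathrm{Var}_P(R)/(u-1)^2 = O\bigl(n^{-1}(u-1)^{-2}\bigr)$, so the integral is $O(n^{-1})$ and we are done.
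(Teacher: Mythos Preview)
Your setup is correct and aligns with the paper's: the likelihood ratio is $R(T,S)=W(T,S)/\E_P[W]$ with $W(T,S)=\sum_\lambda z_\lambda|A(T)_\lambda||A(S)_\lambda|$, and the lower bound $R\ge e^{-1/8}(1+O(n^{-1}))$ from the $\lambda=1^n$ term is exactly how the paper obtains $M_2$. Your derivation of the first inequality in~\eqref{eq:upper_bound} via $\E_P[(R-M_1)_+]$ and Chebyshev is a nice alternative to the paper's route, \emph{provided} the variance estimate holds.

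The gap is precisely there. You reduce everything to $\mathrm{Var}_P(W)=O((n!)^2/n)$ and then declare the covariance control to be ``book-keeping'' that is ``plausible'' from the geometric decay of $z_\lambda\alpha_\lambda/n!$. This is the entire content of the theorem, and it is not established. Your covariance expansion indexes over all pairs of binary partitions of $n$; without further input you have no uniform control on $\mathrm{Cov}(X_\lambda,X_\mu)$, and generating-function calculus for the individual $X_\lambda$ (counts of automorphisms of a prescribed cycle type) is not a standard computation. In particular, nothing in your argument isolates why partitions with a part $\ge 4$ are negligible, nor why for $\lambda=2^s1^{n-2s}$ the quantity $X_\lambda(T)$ is governed by the cherry count $c(T)$.

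The paper fills exactly this hole, but by a pointwise rather than second-moment route. It first discards all partitions outside $R=\{2^s1^{n-2s}\}$ at cost $O(n^{-1})$, then proves the combinatorial sandwich $\binom{c(T)}{s}\le |A(T)_{\mu(s)}|\le\binom{c(T)+s-1}{s}$ (Lemma~\ref{lem:auto_bounds}), and from this deduces (Lemma~\ref{lem:cherry}) both a uniform bound $W(T,S)\le K\,n!$ and, when $c(T),c(S)\ge\alpha n$, the approximation $W(T,S)/n!=\exp\bigl(2c(T)c(S)/n^2\bigr)+O(n^{-1})$. The total variation bound then follows from the concentration of $c(T)/n$ around $1/4$, and the uniform bound gives $M_1$ directly. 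Your variance claim is in fact \emph{true} and could be read off from Lemma~\ref{lem:cherry} a posteriori, but your proposal does not supply an independent proof of it.
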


The first statement means that the two probability measures are globally very close, while the second one provides an estimate on how much the probability of very unlikely events can differ. Theorem~\ref{thm:main} allows us to carry over many structural properties from random plane binary trees to random tanglegrams. For example, the number of cherries (pairs of leaves sharing a common parent) in one half of a random tanglegram of size $n$ is sharply concentrated around $n/4$  and satisfies a central limit theorem; see Section~\ref{sec:conseq} for several further corollaries that follow from Theorem~\ref{thm:main}. First, however, we prove our main theorem in the following section. As it turns out, cherries play a key role in our analysis.

\section{Proof of the main result}

The first step is to rewrite the total variation distance, which is given by
$$d(\nut_n,\nup_n) = \sup_{S \subseteq \B_n^2} |\nut_n(S) - \nup_n(S)| =  \frac12 \sum_{(B_1,B_2) \in \B_n^2} \big|\nut_n(B_1,B_2) - \nup_n(B_1,B_2)\big|.$$
Let us remark here that for convenience we simply write $\nut_n(B_1,B_2)$ and $\nup_n(B_1,B_2)$ for the respective probabilities of the event that the pair $(B_1,B_2)$ is generated. The probability that a certain pair $(B_1,B_2) \in \B_n^2$ of binary trees are the left and right half of a random tanglegram is given by
$$\nut_n(B_1,B_2) = \frac{1}{t_n} \sum_{\lambda} z_{\lambda} \frac{|A(B_1)_{\lambda}|}{|A(B_1)|} \frac{|A(B_2)_{\lambda}|}{|A(B_2)|},$$
while the probability that a pair of randomly chosen plane binary trees is isomorphic to $(B_1,B_2)$ is given by
$$\nup_n(B_1,B_2) =  \Big( \frac{2^{n-1}}{C_n} \Big)^2 \cdot \frac{1}{|A(B_1)||A(B_2)|}.$$
Next we make use of the observation that only partitions $\lambda$ of the form $\lambda = 2^s1^{n-2s}$ matter. We denote the class of all such partitions by $R$. As it turns out, partitions that do not belong to $R$ are asymptotically irrelevant: to be precise, as it was shown in \cite{billey15},
$$\sum_{B_1,B_2} \frac{1}{t_n} \sum_{\lambda \not\in R} z_{\lambda} \frac{|A(B_1)_{\lambda}|}{|A(B_1)|} \frac{|A(B_2)_{\lambda}|}{|A(B_2)|}  = O(n^{-1}).$$
Thus we can restrict ourselves to summations over elements of $R$ in the following:
$$d(\nut_n,\nup_n) = \frac12 \sum_{B_1,B_2} \left| \frac{1}{t_n} \sum_{\lambda \in R} z_{\lambda} \frac{|A(B_1)_{\lambda}|}{|A(B_1)|} \frac{|A(B_2)_{\lambda}|}{|A(B_2)|} - \nup_n(B_1,B_2)\right| + O(n^{-1}).$$
Using the asymptotic formula~\eqref{eq:asymp}, we obtain
$$\frac{1}{t_n |A(B_1)| |A(B_2)|} = \nup_n(B_1,B_2) \cdot \frac{1}{e^{1/8} n!} \cdot (1+O(n^{-1}))$$
 for any pair $(B_1,B_2)$, which gives us
\begin{equation}\label{eq:tot_dist}
d(\nut_n,\nup_n) = \frac12 \sum_{B_1,B_2} \nup_n(B_1,B_2) \left| \frac{1}{e^{1/8} n!} \sum_{\lambda \in R} z_{\lambda} |A(B_1)_{\lambda}||A(B_2)_{\lambda}| - 1 \right| + O(n^{-1}).
\end{equation}
It remains to show that for a randomly chosen pair $B_1,B_2$ of plane binary trees, the expression inside the absolute value bars is small. This will be achieved in a sequence of lemmas. The first provides information on the size of $|A(T)_{\lambda}|$ for $\lambda \in R$:

\begin{lemma}\label{lem:auto_bounds}
Let $\mu(s) = 2^s 1^{n-2s}$ be the partition of $n$ consisting of $s$ twos and $n-2s$ ones, and let $c(T)$ denote the number of cherries of a binary tree $T$. We have the inequalities
$$\binom{c(T)}{s} \leq |A(T)_{\mu(s)}| \leq \binom{c(T)+s-1}{s}.$$
\end{lemma}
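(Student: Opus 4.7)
The lower bound is direct: for each of the $\binom{c(T)}{s}$ subsets $S$ of size $s$ of the cherries of $T$, the involution in $A(T)$ that transposes the two leaves of each cherry in $S$ and fixes every other leaf has cycle type exactly $\mu(s)$, and different subsets yield different involutions.

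For the upper bound the plan is induction on $n$, with base cases $T$ a leaf or a cherry being immediate. Writing $T = T_L \vee T_R$, if $T_L \not\cong T_R$ then $A(T) = A(T_L) \times A(T_R)$ and $|A(T)_{\mu(s)}|$ is the convolution $\sum_{s_L+s_R=s} |A(T_L)_{\mu(s_L)}|\, |A(T_R)_{\mu(s_R)}|$; the induction hypothesis combined with the multiset Vandermonde identity $\sum_{s_L+s_R=s}\binom{a+s_L-1}{s_L}\binom{b+s_R-1}{s_R}=\binom{a+b+s-1}{s}$ gives the bound since $c(T) = c(T_L) + c(T_R)$. If instead $T_L \cong T_R =: T'$ with $n' = n_L = n_R$, then $A(T) = A(T') \wr \Z/2$; an involution in $A(T)_{\mu(s)}$ either preserves the partition $T_L \sqcup T_R$ (a convolution as above with $T_L = T_R = T'$) or swaps the two halves, contributing $|A(T')|$ elements of cycle type $\mu(n')$ parametrised by an isomorphism $T' \to T'$. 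For $s \ne n'$ the previous argument applies verbatim; for $s = n'$ the convolution collapses to $|A(T')_{\mu(n'/2)}|^2$ when $n'$ is even (and to $0$ otherwise), since $|A(T')_{\mu(s_i)}| = 0$ for $s_i > n'/2$. One must then absorb the extra $|A(T')|$ term into the ``slack'' left by the induction hypothesis, and keeping only the two extreme Vandermonde terms $s_L = 0$ and $s_L = n'$ of $\binom{2c(T')+n'-1}{n'}$ reduces the task to the auxiliary inequality $|A(T)| \le 2\binom{c(T)+n-1}{n}$ for every non-leaf $T$ (which is tight when $T$ is a cherry).

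The main obstacle is therefore this auxiliary estimate. It can be handled cleanly: a short induction on $n$ shows that $|V^*(T)| \le 2c(T) - 1$ for every non-leaf $T$, where $V^*(T)$ denotes the set of internal vertices whose two children subtrees are isomorphic; hence $|A(T)| = 2^{|V^*(T)|} \le 2 \cdot 4^{c(T)-1}$. Since every binary tree satisfies $n \ge 2c(T)$, monotonicity of $\binom{c+n-1}{n}$ in $n$ together with an easy induction giving $\binom{3c-1}{c-1} \ge 4^{c-1}$ yields $\binom{c(T)+n-1}{n} \ge \binom{3c(T)-1}{c(T)-1} \ge 4^{c(T)-1}$; combining these produces the required bound, after which the rest of the argument assembles mechanically.
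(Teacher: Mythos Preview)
Your argument is correct and follows essentially the same inductive strategy as the paper: both proofs set up the induction on $|T|$, use the multiplicative Vandermonde/convolution structure when the branches differ, isolate the single problematic coefficient $s=|T'|$ when the branches are identical, and invoke the bound $|A(T')|\le 2^{2c(T')-1}$ to control the extra swap term. The one genuine difference is in how the final numerical inequality is verified. The paper shows $\binom{c'+t/2-1}{t/2}^2+2^{2c'-1}\le\binom{2c'+t-1}{t}$ by a combinatorial injection (interpreting the right side as multisets from $\{1,\dots,2c'\}$ and mapping subsets of $\{1,\dots,2c'-1\}$ to ``unbalanced'' multisets). You instead read off the middle and the two extreme terms of the Vandermonde expansion of $\binom{2c'+t-1}{t}$, reducing to $4^{c'-1}\le\binom{c'+t-1}{t}$, which you then settle via $t\ge 2c'$ and the elementary estimate $\binom{3c-1}{c-1}\ge 4^{c-1}$. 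Your route is arguably cleaner and avoids the ad hoc injection; the paper's route has the small advantage of giving a bijective explanation for where the slack comes from.
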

\begin{proof}
Intuitively speaking, most automorphisms whose cycle type is of the form $\mu(s) = 2^s 1^{n-2s}$ are obtained by picking $s$ cherries and interchanging the leaves of each of these cherries and nothing else. In the following, we make this heuristic precise by providing some explicit inequalities. The inequality
$$|A(T)_{\mu(s)}| \geq \binom{c(T)}{s}$$
is immediate, since there is indeed one automorphism of the desired cycle type $\mu(s)$ for every possible choice of $s$ cherries. For the proof of the other inequality
\begin{equation}\label{eq:auto-upper}
|A(T)_{\mu(s)}| \leq \binom{c(T)+s-1}{s},
\end{equation}
we first define the following polynomial associated with a binary tree $T$:
$$P(T,u) = \sum_{s \geq 0} |A(T)_{\mu(s)}| u^s.$$
Suppose that the two branches of $B$ are $T_1$ and $T_2$ respectively. It is easy to see that
$$P(T,u) = P(T_1,u) P(T_2,u)$$
if $T_1$ and $T_2$ are distinct, and
$$P(T,u) = P(T_1,u)^2 + |A(T_1)| u^{|T_1|}$$
if $T_1$ and $T_2$ are identical (isomorphic). We want to show that the coefficient of $u^s$ in $P(T,u)$ is always less than or equal to $\binom{c(T)+s-1}{s}$, which is the corresponding coefficient in the power series expansion of $(1-u)^{-c(T)}$. Denoting coefficient-wise inequality of polynomials or power series by $\preceq$, we can express this as
$$P(T,u) \preceq (1-u)^{-c(T)},$$
which we now proceed to prove by induction on the size of $T$. For a tree that consists only of a single leaf or a single cherry, the inequality is obvious. We also remark on this occasion that the degree of $P(T,u)$ is at most $|T|/2$.

For the induction step, we first consider the easy case that the two branches are distinct. In this case, we have
$$P(T,u) = P(T_1,u) P(T_2,u) \preceq (1-u)^{-c(T_1)}(1-u)^{-c(T_2)} = (1-u)^{-c(T)},$$
and we are done. If the two branches are identical, then we have to be more careful: since
$$P(T,u) = P(T_1,u)^2 + |A(T_1)| u^{|T_1|},$$
the coefficient-wise inequality follows in the same way, except perhaps for the coefficient of $u^{|T_1|}$, where we need to verify it directly. Write $t = |T_1|$ for the size (number of leaves) of $T_1$. Since the degree of $P(T_1,u)$ is at most $t/2$, the contribution of $P(T_1,u)^2$ to the coefficient of $u^t$ is restricted to the squared coefficient of $u^{t/2}$ in $P(T_1,u)$ (if there is such a coefficient). Applying the induction hypothesis, we find that the coefficient of $u^t$ in $P(T,u)$ is at most
$$\binom{c(T_1) + t/2 -1}{t/2}^2 + |A(T_1)|,$$
where we interpret the binomial coefficient as $0$ if $t$ is odd. Next, we observe that $|A(T_1)| \leq 2^{2c(T_1) - 1}$, which can be obtained by another easy induction from the recursion
$$|A(T)| = |A(T_1)||A(T_2)|$$
if the branches $T_1$ and $T_2$ of $T$ are distinct, and
$$|A(T)| = 2|A(T_1)|^2$$
if they are identical. Hence we have
$$\binom{c(T_1) + t/2 -1}{t/2}^2 + |A(T_1)| \leq \binom{c(T_1) + t/2 -1}{t/2}^2 + 2^{2c(T_1) - 1},$$
and we have to show that this is less than or equal to $\binom{c(T) + t -1}{t}$. This can be achieved in different ways, one of them being combinatorial. First of all, note that $c(T_1) \leq t/2$. The binomial coefficient $\binom{c(T) + t - 1}{t} = \binom{2c(T_1) + t-1}{t}$ gives the number of ways to select $t$ elements from the set $1,2,\ldots,2c(T_1)$, repetitions allowed. The expression $\binom{c(T_1) + t/2 -1}{t/2}^2$ counts those choices for which the same number of elements are taken from the first and from the second half (zero if $t$ is odd). Now we associate to every subset of $\{1,2,\ldots,2c(T_1)-1\}$, of which there are exactly $2^{2c(T_1)-1}$, a possible selection where the numbers in the two halves are not the same: this is done by adding an appropriate number of copies (i.e.~as many as needed to obtain a multiset of $t$ elements) of the element $2c(T_1)$ to the subset unless this creates a ``balanced'' selection, in which case we increase the number of copies of the least element (which must be in the first half) appropriately instead. This clearly creates an injection that proves the desired inequality and hence completes the induction proof of~\eqref{eq:auto-upper}.
\end{proof}

Now we apply our estimates to deal with the expressions that occur in~\eqref{eq:tot_dist} in the following lemma:

\begin{lemma}\label{lem:cherry}\
\begin{enumerate}
\item\label{cherry1} There exists an absolute constant $K$ such that 
$$\frac{1}{n!} \sum_{\lambda \in R} z_{\lambda} |A(B_1)_{\lambda}||A(B_2)_{\lambda}| \leq K$$
for all possible pairs $(B_1,B_2)$ of binary trees with $n$ leaves.
\item\label{cherry2} If we further assume that $c(B_1),c(B_2) \geq \alpha n$ for some fixed constant $\alpha$, then
$$ \frac{1}{n!} \sum_{\lambda \in R} z_{\lambda} |A(B_1)_{\lambda}||A(B_2)_{\lambda}| = \exp \Big( \frac{2c(B_1)c(B_2)}{n^2} \Big)+ O(n^{-1}),$$
where the constant implied by the $O$-term only depends on $\alpha$.
\end{enumerate}
\end{lemma}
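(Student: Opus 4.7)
The plan is to substitute $\lambda=\mu(s)$ with $z_{\mu(s)}=2^s s!(n-2s)!$ and to apply Lemma~\ref{lem:auto_bounds}, which sandwiches each factor $|A(B_i)_{\mu(s)}|$ between $\binom{c_i}{s}$ and $\binom{c_i+s-1}{s}$, where $c_i:=c(B_i)$. Both parts will follow by estimating the two sandwich sums obtained from these bounds.

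For part~(\ref{cherry2}), I would fix a cutoff $s_0=\lfloor n^{1/3}\rfloor$ and split the sum at $s_0$. The lower-sandwich sum simplifies, after cancellation of $z_{\mu(s)}/n!$, to
$$\sum_{s\ge 0}\frac{2^s}{s!}\cdot\frac{\prod_{i=0}^{s-1}(c_1-i)\prod_{i=0}^{s-1}(c_2-i)}{\prod_{i=0}^{2s-1}(n-i)}.$$
Expanding the factors $1-i/c_j$ and $1-i/n$ using $c_j\ge\alpha n$ gives, uniformly for $s\le s_0$,
$$\frac{\prod_{i=0}^{s-1}(c_1-i)\prod_{i=0}^{s-1}(c_2-i)}{\prod_{i=0}^{2s-1}(n-i)} = \frac{c_1^s c_2^s}{n^{2s}}\bigl(1+O(s^2/n)\bigr),$$
with implied constant depending only on $\alpha$. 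Summed against $2^s/s!$, the leading part yields $\exp(2c_1c_2/n^2)$; since $2c_1c_2/n^2\le 1/2$, the $O(s^2/n)$ error contributes $O(1/n)\cdot\sum_s s^2(1/2)^s/s!=O(1/n)$. The tail $s>s_0$ is super-polynomially small in both sandwiches: for the lower, $(1/2)^{s_0}/s_0!$ decays faster than any power of $n$; for the upper, the estimate $\binom{c+s-1}{s}\le(c+s)^s/s!\le(3n/4)^s/s!$ with $(n-2s)!/n!\le(n/2)^{-2s}$ (valid for $s\le n/4$) together with the large-$s$ handling from part~(\ref{cherry1}) gives a contribution $o(1/n)$. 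Finally, for $c\ge\alpha n$ and $s\le s_0$,
$$\frac{\binom{c+s-1}{s}}{\binom{c}{s}} = \prod_{i=0}^{s-1}\frac{c+i}{c-i} = 1+O(s^2/n),$$
so the upper and lower bulk estimates agree within $O(1/n)$, both giving $\exp(2c_1c_2/n^2)+O(1/n)$.

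For part~(\ref{cherry1}), I would bound the sum uniformly by the upper sandwich and split at $s_1=\lfloor n/4\rfloor$. For $s\le s_1$, the estimate $\binom{c+s-1}{s}\le(c+s)^s/s!\le(3n/4)^s/s!$ (using $c\le n/2$) combined with $(n-2s)!/n!\le(n/2)^{-2s}$ bounds each term by $(9/2)^s/s!$, whose full sum is $e^{9/2}$. For $s>s_1$, the coarser bound $\binom{c+s-1}{s}\le 2^{c+s-1}$ together with $c_1+c_2\le n$ reduces each term to $2^{n+2s-2}\cdot z_{\mu(s)}/n!$; Stirling shows that the dominant term (near $s=n/2$) is of order $(C/n)^{n/2}$ for an absolute constant $C$, which vanishes as $n\to\infty$ and is bounded for the finitely many small $n$, yielding an absolute constant $K$.

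The main obstacle is the careful bookkeeping in part~(\ref{cherry2}): the $O(s^2/n)$ corrections from both the falling-factorial quotient and the binomial ratio must be integrated against the sharply decaying kernel $(2c_1c_2/n^2)^s/s!$ and shown to give $O(1/n)$ with the implied constant uniform over $c_1,c_2\ge\alpha n$. The cutoff $s_0$ must be chosen so that the Taylor error $s_0^2/n\to 0$ (forcing $s_0=o(\sqrt n)$, hence $s_0=n^{1/3}$) while $s_0\to\infty$ fast enough that both sandwich tails contribute $o(1/n)$.
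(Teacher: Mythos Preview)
Your proposal is correct and follows essentially the same route as the paper: apply Lemma~\ref{lem:auto_bounds} to sandwich $|A(B_i)_{\mu(s)}|$, split the sum at a cutoff, identify the bulk as $\exp(2c_1c_2/n^2)$ via the expansion $c_i^s/s!\cdot(1+O(s^2/n))$, and bound the tails crudely. The paper differs only in cosmetic choices: it takes the cutoff at $\sqrt{n}$ rather than $n^{1/3}$ (your remark that one is ``forced'' to take $s_0=o(\sqrt{n})$ is overcautious---the $O(s^2/n)$ correction need not be $o(1)$ termwise, only summable against $(1/2)^s/s!$), and for part~(\ref{cherry1}) it splits into three ranges $[0,\sqrt{n}]$, $(\sqrt{n},n/6]$, $(n/6,n/2]$ using the bound $\binom{n/2+s-1}{s}$ directly rather than your $(3n/4)^s/s!$. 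Your two-range split with break at $n/4$ works equally well; the only minor inaccuracy is that in your large-$s$ range the dominant contribution is actually near $s=n/4$ rather than $s=n/2$, but either way the sum is easily seen to decay super-polynomially.
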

\begin{proof}
From the previous lemma, we know that
\begin{align*}
\frac{1}{n!} \sum_{\lambda \in R} z_{\lambda} |A(B_1)_{\lambda}||A(B_2)_{\lambda}| &\leq \frac{1}{n!} \sum_{0 \leq s \leq n/2} z_{\mu(s)} \binom{c(T_1)+s-1}{s}\binom{c(T_2)+s-1}{s} \\
&\leq \frac{1}{n!} \sum_{0 \leq s \leq n/2} z_{\mu(s)} \binom{n/2+s-1}{s}^2,
\end{align*}
and we would like to show that this is bounded by an absolute constant. By definition, $z_{\mu(s)} = 2^s s!(n-2s)!$, so it remains to bound
$$\frac{1}{n!} \sum_{0 \leq s \leq n/2} 2^s s! (n-2s)! \binom{n/2+s-1}{s}^2.$$
To this end, we apply the simple inequality
$$(n-2s)! = n! \prod_{j=0}^{2s-1} (n-j)^{-1} \leq n! (n-2s)^{-2s}$$
and split the sum into three parts: the first part is
\begin{align*}
\frac{1}{n!} \sum_{0 \leq s \leq \sqrt{n}} 2^s s! (n-2s)! \binom{n/2+s-1}{s}^2 
&\leq \sum_{0 \leq s \leq \sqrt{n}} 2^s s! \frac{(n-2s)!}{n!} \frac{(n/2+s-1)^{2s}}{s!^2} \\
&\leq \sum_{0 \leq s \leq \sqrt{n}} 2^s (n-2\sqrt{n})^{-2s} \frac{(n/2+\sqrt{n})^{2s}}{s!} \\
&= \sum_{0 \leq s \leq \sqrt{n}} \frac{2^{-s}}{s!} \Big(1 - \frac{2}{\sqrt{n}}\Big)^{-2s} \Big(1 + \frac{2}{\sqrt{n}}\Big)^{2s} \\
&\leq \sum_{s \geq 0} \frac{2^{-s}}{s!} \Big( \frac{\sqrt{n}+2}{\sqrt{n}-2} \Big)^{2s} \\
&= \exp \Big( \frac{(\sqrt{n}+2)^2}{2(\sqrt{n}-2)^2} \Big),
\end{align*}
which converges to $e^{1/2}$ as $n \to \infty$ and is therefore bounded. Likewise, since $n-2s > n/2+s-1$ if $s \leq n/6$,
\begin{align}
\frac{1}{n!} \sum_{\sqrt{n} < s \leq n/6} 2^s s! (n-2s)! \binom{n/2+s-1}{s}^2 
&\leq \sum_{\sqrt{n} < s \leq n/6} 2^s s! (n-2s)^{-2s} \frac{(n/2+s-1)^{2s}}{s!^2}  \nonumber \\
&\leq \sum_{\sqrt{n} < s \leq n/6} \frac{2^s}{s!} = O \Big( \frac{2^{\lceil\sqrt{n}\rceil}}{\lceil\sqrt{n}\,\rceil !} \Big), \label{eq:est1}
\end{align}
which goes to $0$ as $n \to \infty$ and is therefore also bounded. Finally,
\begin{align}
\frac{1}{n!} \sum_{n/6 < s \leq n/2} 2^s s! (n-2s)! \binom{n/2+s-1}{s}^2 
&\leq \sum_{n/6 < s \leq n/2} \frac{2^s}{\binom{n-s}{s}}  \frac{(n-s)!}{n!} (2^{n/2+s-1})^2 \nonumber \\
&\leq \sum_{n/6 < s \leq n/2} 2^{n/2} (n/2)^{-s} 2^{2n} \nonumber \\
&\leq (n/2) \cdot 2^{5n/2} (n/2)^{-n/6}, \label{eq:est2}
\end{align}
which also goes to $0$ as $n \to \infty$. This completes the proof of the first part of Lemma~\ref{lem:cherry}.

For the rest of the proof, we assume that $c(B_1),c(B_2) \geq \alpha n$. Now the inequalities
$$\binom{c(B_i)}{s} \leq |A(B_i)_{\mu(s)}| \leq \binom{c(B_i) + s-1}{s}$$
imply
$$|A(B_i)_{\mu(s)}| = \frac{c(B_i)^s}{s!} \Big( 1 + O(s^2/n) \Big)$$
whenever $s \leq \sqrt{n}$. We already know from the estimates~\eqref{eq:est1} and~\eqref{eq:est2} that the partitions $\mu(s)$ with $s > \sqrt{n}$ only contribute a small error term (even much less than $O(n^{-1})$) to the sum
$$ \frac{1}{n!} \sum_{\lambda \in R} z_{\lambda} |A(B_1)_{\lambda}||A(B_2)_{\lambda}|,$$
so we can focus on the values of $s$ with $s \leq \sqrt{n}$. For those, we also have
$$(n-2s)! = n! \cdot n^{-2s}  \Big( 1 + O(s^2/n) \Big).$$
Putting everything together yields
\begin{align*}
\frac{1}{n!} \sum_{\lambda \in R} z_{\lambda} |A(B_1)_{\lambda}||A(B_2)_{\lambda}| &= \frac{1}{n!} \sum_{0 \leq s \leq \sqrt{n}} 2^s s! (n-2s)! |A(B_1)_{\mu(s)}||A(B_2)_{\mu(s)}| + O(n^{-1}) \\
&= \sum_{0 \leq s \leq \sqrt{n}} 2^s s!  n^{-2s}  \frac{c(B_1)^s}{s!} \frac{c(B_2)^s}{s!} \Big( 1 + O(s^2/n) \Big) + O(n^{-1}) \\
&= \sum_{s \geq 0} \frac{1}{s!} \Big(\frac{2c(B_1)c(B_2)}{n^2} \Big)^s - \sum_{s > \sqrt{n}} \frac{1}{s!} \Big(\frac{2c(B_1)c(B_2)}{n^2} \Big)^s \\
&\qquad +O \bigg( n^{-1} \sum_{s \geq 0} \frac{s^2}{s!} \Big(\frac{2c(B_1)c(B_2)}{n^2} \Big)^s \bigg) + O(n^{-1}) \\
&= \exp \Big( \frac{2c(B_1)c(B_2)}{n^2} \Big) + O(n^{-1}).
\end{align*}
Note here that $2c(B_1)c(B_2)/n^2$ is always bounded above by $\frac12$, so the infinite sums are easily estimated. This completes the proof of Lemma~\ref{lem:cherry}.
\end{proof}

Now we can return to the proof of Theorem~\ref{thm:main}. It is well known that the number of cherries in a random plane binary tree with $n$ leaves asymptotically follows a normal distribution, with mean $n(n-1)/(4n-6) \sim n/4$ and variance $O(n)$ (see \cite[Examples III.14 and IX.25]{flajolet09}). Thus if $B$ is a random plane binary tree with $n$ leaves, then by Chebyshev's inequality $|c(B) -n/4| > k$ holds with probability at most $O(n/k^2)$. Taking $k = n/8$, we find that $c(B) < n/8$ only occurs with probability $O(n^{-1})$ when a plane binary tree $B$ is selected uniformly at random (in fact, it is possible to obtain better estimates, but this is enough for our purposes). If either $c(B_1) < n/8$ or $c(B_2) < n/8$, we estimate the sum
$$\frac{1}{n!} \sum_{\lambda \in R} z_{\lambda} |A(B_1)_{\lambda}||A(B_2)_{\lambda}|$$
by means of the first part of Lemma~\ref{lem:cherry}, showing that this case only contributes $O(n^{-1})$ to the expression in~\eqref{eq:tot_dist}. Otherwise, we can use the second part of Lemma~\ref{lem:cherry} to obtain
\begin{align*}
d(\nut_n,\nup_n) &= \frac12 \sum_{B_1,B_2} \nup_n(B_1,B_2) \left| \frac{1}{e^{1/8} n!} \sum_{\lambda \in R} z_{\lambda} |A(B_1)_{\lambda}||A(B_2)_{\lambda}| - 1 \right| + O(n^{-1}) \\
&= \frac12 \sum_{B_1,B_2} \nup_n(B_1,B_2) \left| \exp \Big( \frac{2c(B_1)c(B_2)}{n^2} - \frac18 \Big) - 1 \right| + O(n^{-1}).
\end{align*}
The sum can be interpreted as the expected value of
$$\left| \exp \Big( \frac{2c(B_1)c(B_2)}{n^2} - \frac18 \Big) - 1 \right|$$
with respect to the measure $\nup_n$. Since
$$\left| \exp \Big( 2x_1 x_2  - \frac18 \Big) - 1 \right| = O \Big( \Big|x_1 - \frac14 \Big| + \Big| x_2 - \frac14 \Big| \Big)$$
for bounded $x_1,x_2$ (and $x_1 = c(B_1)/n$ and $x_2 = c(B_2)/n$ are indeed bounded above and below), we can estimate the total variation distance by
$$d(\nut_n,\nup_n) = O \bigg( \E^{(P)}_n \Big( \Big| \frac{c(B_1)}{n} - \frac14 \Big| + \Big| \frac{c(B_2)}{n} - \frac14 \Big| \Big) + n^{-1} \bigg),$$
where $\E_n^{(P)}$ denotes the expected value with respect to $\nup_n$. Now we can use independence of $B_1$ and $B_2$. Letting $\E_n^{(B)}$ denote the expected value when a plane binary tree $B$ is chosen uniformly at random, we have
$$\E_n^{(P)} \Big( \Big| \frac{c(B_1)}{n} - \frac14 \Big| + \Big| \frac{c(B_2)}{n} - \frac14 \Big| \Big) = 2 \E_n^{(B)} \Big( \Big| \frac{c(B)}{n} - \frac14 \Big| \Big),$$
and Jensen's inequality gives us
$$\E_n^{(B)} \Big( \Big| \frac{c(B)}{n} - \frac14 \Big| \Big) \leq \bigg( \E_n^{(B)} \Big( \Big( \frac{c(B)}{n} - \frac14 \Big)^2 \Big)\bigg)^{1/2} = O(n^{-1/2})$$
by the aforementioned fact that the variance of the number of cherries is only of linear order. Putting everything together, we obtain
\begin{equation}\label{eq:final_est}
d(\nut_n,\nup_n) = O(n^{-1/2}),
\end{equation}
which is exactly the first part of Theorem~\ref{thm:main}.

\begin{remark}
Since the typical fluctuations of $c(B_1)$ and $c(B_2)$ are of order $\sqrt{n}$, the proof also shows that the order of magnitude of our estimate is best possible, i.e.~the exponent $\frac12$ in~\eqref{eq:final_est} cannot be increased.
\end{remark}

Now we attend to the second statement of Theorem~\ref{thm:main}, namely~\eqref{eq:upper_bound}, which is somewhat easier to prove. We first observe that
$$\nut_n(S) = \sum_{(B_1,B_2) \in S} \nup_n(B_1,B_2) \frac{1}{e^{1/8} n!} \sum_{\lambda \in R} z_{\lambda} |A(B_1)_{\lambda}||A(B_2)_{\lambda}| + O(n^{-1})$$
by the same argument that gave us~\eqref{eq:tot_dist}. Part~(\ref{cherry1}) of Lemma~\ref{lem:cherry} guarantees that
$$\frac{1}{e^{1/8} n!} \sum_{\lambda \in R} z_{\lambda} |A(B_1)_{\lambda}||A(B_2)_{\lambda}| $$
is bounded above by an absolute constant, while the trivial estimate
$$\frac{1}{e^{1/8} n!} \sum_{\lambda \in R} z_{\lambda} |A(B_1)_{\lambda}||A(B_2)_{\lambda}| \geq \frac{1}{e^{1/8}}$$
is obtained by only taking the partition $\lambda = \mu(0) = 1^n$ consisting solely of ones into account (we have $z_{\lambda} = n!$ and $|A(B_1)_{\lambda}| = |A(B_2)_{\lambda}| = 1$ for this particular choice of $\lambda$). The inequalities~\eqref{eq:upper_bound} follow immediately.

\section{Consequences of the main theorem}\label{sec:conseq}

Our main theorem tells us that the probability distributions $\nut_n$ and $\nup_n$ are almost the same. It follows immediately that the behaviour of various shape parameters can be carried over from plane binary trees. As a first instance, we obtain Conjecture 2 of \cite{billey15} (as well as the more specific Conjecture $1$, which only considers cherries), which deals with the number of copies of a fixed rooted binary tree $B$ occurring as a fringe subtree (i.e.~a subtree consisting of a vertex and all its successors) in one half of a random tanglegram.

\begin{corollary}\label{cor:occurrences}
The average number of cherries in the left (or right) tree of a random tanglegram of size $n$ is asymptotically equal to $n/4$; generally, the average number of occurrences of a fixed binary tree $B$ is asymptotically equal to $\mu_Bn$, where the constant $\mu_B$ is given by $2^{1-|B|}/|A(B)|$. Moreover, the number of occurrences is asymptotically normally distributed: if $X_{B,n}$ denotes the number of occurrences of $B$ in the left half of a random tanglegram of size $n$, then we have
$$\lim_{n \to \infty} \nut_n \big( X_{B,n} \leq \mu_B n + x \sigma_B \sqrt{n} \big)  = \frac{1}{\sqrt{2\pi}} \int_{-\infty}^x e^{-t^2/2} \,dt$$
for every real $x$, where the constant $\sigma_B$ is defined by $\sigma^2_B = 2^{1-|B|}/|A(B)| + 4^{1-|B|}(1-2|B|)/|A(B)|^2$. In particular, for cherries $C$ we have $\sigma_C = \frac14$.
\end{corollary}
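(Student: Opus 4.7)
The plan is to transfer the corresponding statement from the product measure $\nu^{(P)}_n$ (two independent uniformly random plane binary trees) to the tanglegram measure $\nu^{(T)}_n$ by means of Theorem~\ref{thm:main}. The necessary input on the $\nu^{(P)}_n$ side is classical: for a uniformly random plane binary tree $B$ with $n$ leaves, the number $X_{B,n}$ of fringe occurrences of a fixed binary tree $B$ has mean $\mu_B n + O(1)$, variance $\sigma_B^2 n + O(1)$, and satisfies a central limit theorem with the stated constants. This follows from the standard analytic-combinatorial treatment of additive functionals on simply generated (Catalan) trees; the precise form of $\mu_B$ and $\sigma_B$ emerges from a bivariate generating function computation as in Examples~III.14 and~IX.25 of \cite{flajolet09}. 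Under $\nu^{(P)}_n$, the distribution of $X_{B,n}$ counted in the left half is identical to its distribution under the single-tree model, so the full CLT is already available on the product side.

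The second step is to transfer the CLT to $\nu^{(T)}_n$. For each real $x$, the event
$$S_x = \Big\{ (T_1,T_2) \in \B_n^2 \colon X_{B,n}(T_1) \leq \mu_B n + x \sigma_B \sqrt{n} \Big\}$$
is a subset of $\B_n^2$, and Theorem~\ref{thm:main} gives
$$\big| \nut_n(S_x) - \nup_n(S_x) \big| \leq d(\nut_n,\nup_n) = O(n^{-1/2}).$$
Since $\nup_n(S_x) \to \Phi(x) := \frac{1}{\sqrt{2\pi}} \int_{-\infty}^x e^{-t^2/2}\,dt$ by the product-side CLT, the same limit holds under $\nut_n$, which is exactly the asymptotic normality claim.

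For the asymptotics of the mean, I would use that $X_{B,n}/n \in [0,1]$, so that the standard duality between total variation distance and expectations of bounded functions yields
$$\big| \E^{(T)}_n[X_{B,n}/n] - \E^{(P)}_n[X_{B,n}/n] \big| \leq d(\nut_n,\nup_n) = O(n^{-1/2}).$$
Combining this with $\E^{(P)}_n[X_{B,n}] = \mu_B n + O(1)$ gives $\E^{(T)}_n[X_{B,n}] = \mu_B n + O(\sqrt{n})$, proving the claimed first-order asymptotic. The specific case of cherries is then a direct substitution of $|C|=2$ and $|A(C)|=2$: one gets $\mu_C = 2^{1-2}/2 = 1/4$ and $\sigma_C^2 = 2^{-1}/2 + 4^{-1}(1-4)/4 = 1/16$.

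There is no real obstacle in the transfer itself; the argument is essentially a one-line application of Theorem~\ref{thm:main}. The main task is simply to invoke the classical CLT for fringe subtree occurrences in Catalan trees in a form that makes the constants $\mu_B$ and $\sigma_B$ visible. Should the explicit constants not be stated in the exact shape required by the corollary in the literature I cite, I would redo the short generating-function computation — a pointing/marking argument applied to the Catalan generating function $C(z) = (1 - \sqrt{1-4z})/2$ with a mark for occurrences of $B$ — to confirm that the mean constant is $2^{1-|B|}/|A(B)|$ and that the variance constant reduces to the stated closed form.
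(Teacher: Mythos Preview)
Your proposal is correct and follows essentially the same approach as the paper: both invoke the classical CLT for fringe-subtree counts in uniformly random plane binary trees (the paper via the bivariate functional equation $Y(x,u) = x + Y(x,u)^2 + (u-1)\frac{2^{|B|-1}}{|A(B)|}x^{|B|}$ and \cite[Theorem~2.23]{drmota09}, you via \cite{flajolet09}), then transfer both the distributional limit and the mean asymptotics to $\nut_n$ using the total variation bound of Theorem~\ref{thm:main}. Your treatment of the mean via the duality $|\E^{(T)}_n[X_{B,n}/n] - \E^{(P)}_n[X_{B,n}/n]| \leq d(\nut_n,\nup_n)$ is exactly the paper's argument that ``the change in the mean \ldots\ is at most $O(n \cdot d(\nut_n,\nup_n)) = O(\sqrt{n})$'', just phrased slightly differently.
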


\begin{proof}
This is a consequence of the analogous statement for plane binary trees, which is obtained by standard means, cf.~\cite[Section 3.3]{drmota09}. There are $2^{|B|-1}/|A(B)|$ plane binary trees isomorphic to a binary tree $B$. Therefore, the bivariate generating function $Y(x,u)$ for plane binary trees, where the exponent of $x$ marks the number of leaves and $u$ the number of occurrences of $B$, satisfies the functional equation
$$Y(x,u) = x + Y(x,u)^2 + (u-1) \cdot \frac{2^{|B|-1}}{|A(B)|} x^{|B|}.$$
This can be explained from the observation that the number of occurrences of $B$ is either the sum of the occurrences in the two branches, or $1$ if the tree itself is isomorphic to $B$.
The central limit theorem and the asymptotic formula for the mean (for plane binary trees) now follow immediately from \cite[Theorem 2.23]{drmota09}.


Once the statement has been established for plane binary trees, it is easily carried over to tanglegrams: the probability of the event $X_{B,n} \leq \mu_B n + x \sigma_B \sqrt{n}$ can only change by $O(n^{-1/2})$ in view of Theorem~\ref{thm:main}, so the central limit theorem follows trivially. As for the mean value, we note that the number of occurrences is clearly $O(n)$, so the change in the mean from plane binary trees to tanglegrams is at most $O\big(n \cdot d(\nut_n,\nup_n)\big) = O(\sqrt{n})$, which does not affect the main term. 
\end{proof}

A similar corollary provides information about the root branches:

\begin{corollary}
The limiting probability that one of the root branches of the left (or right) tree of a random tanglegram consists of a single leaf is $\frac12$. Generally, the limiting probability that a fixed binary tree $B$ occurs as one of the two root branches of the left tree of a random tanglegram is $2^{-|B|}/|A(B)|$.
\end{corollary}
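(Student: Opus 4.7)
My plan is to reduce the statement to the analogous claim for uniformly random plane binary trees via Theorem~\ref{thm:main}, and then compute this latter probability directly from the recursive structure of plane binary trees together with the asymptotics of the Catalan numbers.

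First I would observe that for any fixed binary tree $B$ with $k=|B|$, the event ``$B$ occurs as one of the two root branches of the left tree'' is a subset $S$ of $\B_n^2$ (it depends only on the first coordinate). Thus by Theorem~\ref{thm:main},
$$\nut_n(S) = \nup_n(S) + O(n^{-1/2}),$$
so it suffices to compute $\lim_{n\to\infty}\nup_n(S)$, which by independence of the two coordinates under $\nup_n$ equals the corresponding limiting probability for a uniformly random plane binary tree of size $n$.

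Next I would count directly. The number of plane binary trees with $n$ leaves is $C_n$. Among the $2^{k-1}/|A(B)|$ plane binary trees isomorphic to $B$, pairing each with an arbitrary plane binary tree of size $n-k$ as the other branch gives
$$\frac{2^{k-1}}{|A(B)|}\,C_{n-k}$$
plane binary trees of size $n$ whose left root branch is isomorphic to $B$, and the same count for the right root branch. The intersection (both branches isomorphic to $B$) requires $n=2k$, hence is empty for all sufficiently large $n$ with $B$ fixed. Using $C_{n-k}/C_n\to 4^{-k}$ as $n\to\infty$ (a direct consequence of $C_n\sim 4^{n-1}/(\sqrt{\pi}\,n^{3/2})$), we obtain
$$\lim_{n\to\infty}\nup_n(S) \;=\; 2\cdot\frac{2^{k-1}}{|A(B)|}\cdot 4^{-k} \;=\; \frac{2^{-k}}{|A(B)|} \;=\; \frac{2^{-|B|}}{|A(B)|}.$$
Combining with the first step proves the general statement; specializing to $B$ a single leaf ($|B|=|A(B)|=1$) yields the claimed value $1/2$.

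I do not expect any serious obstacle here: the only delicate point is confirming that the inclusion--exclusion correction for both branches being isomorphic to $B$ vanishes asymptotically, which is immediate once one notes that it occurs only for the single value $n=2|B|$. The whole argument is effectively a transfer through Theorem~\ref{thm:main} of a one-line computation on Catalan asymptotics.
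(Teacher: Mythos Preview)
Your proposal is correct and follows essentially the same approach as the paper: both reduce to plane binary trees via Theorem~\ref{thm:main}, count plane binary trees with a root branch isomorphic to $B$ as $2\cdot 2^{|B|-1}/|A(B)|\cdot C_{n-|B|}$ (for $n>2|B|$, so no double-counting), and take the limit using $C_{n-|B|}/C_n\to 4^{-|B|}$. Your write-up is slightly more explicit about the inclusion--exclusion correction and the transfer step, but the argument is the same.
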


\begin{proof}
This can be shown by a simple direct calculation: again, we note first that there are $2^{|B|-1}/|A(B)|$ plane binary trees isomorphic to a binary tree $B$. Thus for large enough $n$ (greater than $2|B|$), the number of plane binary trees for which one of the two branches is isomorphic to $|B|$ is $2 \cdot 2^{|B|-1}/|A(B)| \cdot C_{n-|B|}$. Dividing by the total number of plane binary trees and taking the limit gives the desired result for plane binary trees, and it follows automatically for tanglegrams:
$$\lim_{n \to \infty} \frac{2 \cdot 2^{|B|-1}/|A(B)| \cdot C_{n-|B|}}{C_n} = \frac{2^{-|B|}}{|A(B)|}.$$
\end{proof}

Our next corollary is concerned with the height, i.e.~the length of the longest path from the root to a leaf. Again, the result for plane binary trees carries over directly.

\begin{corollary}
The average height of the left (or right) tree of a random tanglegram is asymptotically equal to $2\sqrt{\pi n}$, and the height asymptotically follows the theta distribution: if $H_n$ denotes the height of the left half of a random tanglegram of size $n$, then we have
$$\lim_{n \to \infty} \nut_n \big( H_n \geq x\sqrt{n} \big) = \Theta(x) = \sum_{j \geq 1} e^{-j^2 x^2}(4j^2x^2-2)$$
for every positive real number $x$.
\end{corollary}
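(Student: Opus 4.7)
The plan is to transfer both claims from the corresponding classical result for random plane binary trees: if $\tilde H_n$ denotes the height of a uniformly random plane binary tree with $n$ leaves, then $\tilde H_n/\sqrt n$ converges in distribution to $\Theta$ and $\E[\tilde H_n] \sim 2\sqrt{\pi n}$ (Flajolet--Odlyzko; see e.g.~\cite{flajolet09}). Since the height depends only on the left half, the event $\{H_n \geq x\sqrt{n}\}$ corresponds to a subset of $\B_n^2$ and Theorem~\ref{thm:main} applies directly.

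For the distributional statement, the first part of Theorem~\ref{thm:main} yields
$$\bigl|\nut_n(H_n \geq x\sqrt{n}) - \nup_n(H_n \geq x\sqrt{n})\bigr| \leq 2d(\nut_n,\nup_n) = O(n^{-1/2}),$$
so the limit $\Theta(x)$ is inherited from the plane binary tree case without further work.

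The mean is the subtler statement: unlike Corollary~\ref{cor:occurrences}, where the quantity in question is of order $n$ and a trivial application of the total variation bound suffices, here the expectation is only of order $\sqrt{n}$, while $H_n$ itself can be as large as $n-1$, so the naive estimate $|\E^{(T)}[H_n] - \E^{(P)}[H_n]| = O(n\cdot d(\nut_n,\nup_n)) = O(\sqrt{n})$ is of the same order as the leading term and therefore useless. My plan is to write
$$\E^{(T)}[H_n] - \E^{(P)}[H_n] = \int_0^{n-1}\bigl(\nut_n(H_n > t) - \nup_n(H_n > t)\bigr)\,dt$$
and split the integral at a threshold $t = K\sqrt{n}$. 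The total variation bound handles the range $t \leq K\sqrt{n}$ (contribution $O(K)$), while on the range $t > K\sqrt{n}$ I would invoke the second part of Theorem~\ref{thm:main} to get $\nut_n(H_n > t) \leq M_1\nup_n(H_n > t) + O(n^{-1})$, reducing matters to a tail estimate for plane binary trees. The exponential decay of $\Theta$ makes this tail $\sqrt{n}\cdot\varepsilon(K)$ with $\varepsilon(K)\to 0$ as $K\to\infty$, and the $O(n^{-1})$ correction contributes only $O(1)$ after integration over the range of length at most $n$. Letting $K = K(n)$ tend to infinity sufficiently slowly then yields $\E^{(T)}[H_n]/\sqrt{n}\to 2\sqrt{\pi}$.

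The main obstacle is precisely this uniform integrability step for the mean. The distributional convergence is essentially immediate, but controlling the contribution of the tail to the expectation requires the stronger comparison $\nut_n(S) \leq M_1\nup_n(S) + O(n^{-1})$ from the second part of Theorem~\ref{thm:main}, which rules out the possibility that an atypically tall tree, rare under $\nup_n$, acquires significantly larger probability under $\nut_n$.
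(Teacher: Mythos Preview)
Your approach is essentially the same as the paper's: the distributional part is transferred verbatim via the total variation bound, and for the mean you split the integral representation of the expectation at a threshold, handle the bulk with the first part of Theorem~\ref{thm:main}, and control the tail via the second part~\eqref{eq:upper_bound}. The paper does exactly this, with threshold $n^{2/3}$ rather than $K\sqrt{n}$.

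There is one genuine gap in your execution of the tail step. You write that ``the exponential decay of $\Theta$ makes this tail $\sqrt{n}\cdot\varepsilon(K)$ with $\varepsilon(K)\to 0$.'' But convergence in distribution of $\tilde H_n/\sqrt{n}$ to $\Theta$ does \emph{not} by itself give a bound on $\int_{K\sqrt{n}}^{n} \nup_n(H_n>t)\,dt$ that is uniform in $n$; you need uniform integrability of $\tilde H_n/\sqrt{n}$, which requires an input beyond the limit law. The paper supplies precisely this input by quoting the explicit tail estimate $\nup_n(H_n>h)=O(n^{3/2}e^{-h^2/(4n)})$ from \cite{flajolet93}. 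With that estimate in hand (or any bound on a higher moment of $\tilde H_n/\sqrt{n}$), your argument goes through; without it, the step ``exponential decay of $\Theta$ $\Rightarrow$ tail contribution $\sqrt{n}\,\varepsilon(K)$'' is unjustified. Incidentally, once you have such a quantitative tail bound, the fixed cutoff $n^{2/3}$ used in the paper is slightly cleaner than letting $K=K(n)\to\infty$ slowly: it gives a concrete error $O(n^{1/6})$ for the difference of means.
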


\begin{proof}
Again, the statement on the limiting distribution follows from the analogous statement for plane binary trees (\cite[Proposition VII.16]{flajolet09}). For the mean value, we need to be slightly more careful than in the proof of Corollary~\ref{cor:occurrences}, since trees whose height is of linear order might a priori create an error term of order $\sqrt{n}$. However, \cite[Theorem 1.3]{flajolet93} guarantees that the probability for the height of a random plane binary tree to be greater than $h$ is $O(n^{3/2} e^{-h^2/(4n)})$. We apply this to $h = n^{2/3}$ (for example) and combine it with~\eqref{eq:upper_bound}. It follows that the height of one half of a random tanglegram is greater than $h$ with probability $O(n^{-1})$. So those trees only contribute at most $O(1)$ to the average height. Now we can apply the first part of Theorem~\ref{thm:main} to see that the average height only changes by $O(n^{2/3} \cdot n^{-1/2}) = O(n^{1/6})$ from plane binary trees to tanglegrams. Since the average height of a plane binary tree is asymptotically equal to $2\sqrt{\pi n}$ (\cite[Theorem B]{flajolet82}, see also \cite[Corollary 1.1]{flajolet93}), the corollary follows.
\end{proof}

Finally, we consider the automorphism group, more specifically the number of generators (which is the number of vertices for which the two branches are identical) of one half $B_1$ of a random tanglegram. We denote this parameter by $g(B_1)$ and note that $|A(B_1)| = 2^{g(B_1)}$, so this also provides information on the size of the automorphism group. The parameter $g$ was studied for plane binary trees by B\'ona and Flajolet in \cite{bona09}. We obtain the following result, which settles Conjecture 3 of \cite{billey15}:

\begin{corollary}
The expected number of generators of the left (or right) half of a random tanglegram of size $n$ is asymptotically equal to $\gamma n$, where the constant $\gamma$, whose numerical value is $0.2710416936\ldots$, is the value of the function $f(x)$ defined by $f(x) = x + \frac12 f(x)^2 + (x-\frac12) f(x^2)$ at $x = \frac14$. Moreover, the number of generators is asymptotically normally distributed.
\end{corollary}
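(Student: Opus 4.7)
The strategy mirrors the earlier corollaries of this section: invoke the analogous result for plane binary trees, established by B\'ona and Flajolet in \cite{bona09}, and transfer it to random tanglegrams via Theorem~\ref{thm:main}. On the plane binary tree side, the starting point is the additive recursion
$$g(T) = g(T_1) + g(T_2) + [T_1 \cong T_2],$$
where $T_1, T_2$ are the two branches of $T$. Translated into generating function language, this produces a bivariate generating function $Y(x,u) = \sum_{T} x^{|T|} u^{g(T)}$ (sum over plane binary trees) satisfying a functional equation that combines a quadratic term $Y(x,u)^2$ with a substitution of the form $Y(x^2, u^2)$ accounting for pairs of isomorphic branches. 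Singularity analysis of $Y$ near its dominant singularity at $x = \tfrac14$ yields the asymptotic mean, and Hwang's quasi-power theorem (see \cite[Theorem IX.7]{flajolet09}) gives the Gaussian limit law for plane binary trees. A manipulation of the resulting equations isolates the limiting mean $\gamma$ as the value $f(\tfrac14)$ of the auxiliary function $f$ in the statement.

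Given the plane binary tree result, the transfer to random tanglegrams is routine. For the central limit theorem, fix any real $x$ and let $E_n = \bigl\{g(B_1) \leq \gamma n + x \sigma \sqrt{n}\bigr\}$, where $\sigma$ is the normalising constant provided by \cite{bona09}. Then
$$\bigl| \nut_n(E_n) - \nup_n(E_n) \bigr| \leq d(\nut_n, \nup_n) = O(n^{-1/2}),$$
so the limiting distribution of $g(B_1)$ under $\nut_n$ coincides with the standard normal limit obtained under $\nup_n$. For the asymptotic mean we use the trivial bound $g(B_1) \leq n - 1$ (since $g$ counts a subset of the $n-1$ internal vertices) together with the first part of Theorem~\ref{thm:main} to conclude
$$\Bigl| \sum_{B_1,B_2} g(B_1)\, \nut_n(B_1,B_2) - \sum_{B_1,B_2} g(B_1)\, \nup_n(B_1,B_2) \Bigr| \leq (n-1)\, d(\nut_n, \nup_n) = O(\sqrt{n}),$$
which is negligible next to the main term $\gamma n$, exactly as in the proof of Corollary~\ref{cor:occurrences}.

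The only real obstacle lies on the plane binary tree side, where one must verify analyticity of the relevant generating functions in a sufficiently large domain in order to apply singularity analysis and the quasi-power framework (in particular, the presence of $Y(x^2,u^2)$ needs to be handled carefully, since it forces one to work with a system rather than a single equation). This is the technical content of \cite{bona09}, which we invoke as a black box; our contribution reduces to the routine transfer sketched above.
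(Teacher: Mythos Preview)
Your proposal is correct and follows exactly the paper's approach: cite the B\'ona--Flajolet result for plane binary trees and transfer it via Theorem~\ref{thm:main}, handling the CLT by the total variation bound and the mean by the trivial $O(n)$ bound on $g(B_1)$, just as in Corollary~\ref{cor:occurrences}. The paper's own proof is a one-line reference to \cite[Theorem 2, (ii)]{bona09} together with the remark that the argument is the same as for Corollary~\ref{cor:occurrences}; you have simply spelled out those details.
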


\begin{proof}
In the same way as Corollary~\ref{cor:occurrences}, this follows from the analogous statement for plane binary trees, see \cite[Theorem 2, (ii)]{bona09}.
\end{proof}

\section{The number of matched cherries}

As it was pointed out in~\cite{billey15}, cherries play a major role in the literature on tanglegrams, and it was asked what the expected number of matched cherries (two cherries whose leaves are matched to each other) in a random tanglegram would be. We prove that this expected number is generally very small: in the limit, it follows a Poisson distribution whose mean and variance are $\frac14$. This result and its proof also help to better understand the asymptotic formula~\eqref{eq:asymp}. An important observation in this regard is the fact that each matched cherry induces a nontrivial automorphism, so $k$ matched cherries yield at least $2^k$ different automorphisms.

\begin{theorem}\label{thm:matched_cherries}
For every positive integer $k$, the probability that there are exactly $k$ matched cherries in a random tanglegram of size $n$ converges to $e^{-1/4} 4^{-k}/k!$, i.e.~the number of matched cherries has a limiting Poisson distribution. Moreover, the probability that there are exactly $k$ matched cherries and no more than the $2^k$ automorphisms induced by these converges to the same limit. Hence with high probability all automorphisms of a random tanglegram are induced by matched cherries.
\end{theorem}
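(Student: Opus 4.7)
The plan is to compute the probability generating function $\sum_k \nut_n(X_n = k)\,x^k$ of $X_n$ (the number of matched cherries) and show that it converges to $\exp((x-1)/4)$, the PGF of $\mathrm{Poisson}(1/4)$. From the double-coset description of tanglegrams and the identity $|A(T)vA(S)|\cdot|\mathrm{Stab}(v)| = |A(T)|\,|A(S)|$, one has
$$\sum_{k\ge 0}\nut_n(X_n=k)\,x^k \;=\; \frac{1}{t_n}\sum_{T,S}\frac{1}{|A(T)|\,|A(S)|}\sum_{v\in\s_n}x^{X(T,v,S)}\,|\mathrm{Stab}(v)|,$$
where $\mathrm{Stab}(v)\subseteq A(T)\times A(S)$ is the stabilizer of $v$ under $(\alpha,\beta)\cdot v = \alpha v\beta^{-1}$. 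Each matched cherry $\{a,b\}\leftrightarrow\{c,d\}$ contributes the involution $((a\,b),(c\,d))$ to $\mathrm{Stab}(v)$, and these $X(v)$ commuting involutions generate a subgroup $\mathrm{Stab}^{(c)}(v)$ of order exactly $2^{X(v)}$. Write $|\mathrm{Stab}(v)| = 2^{X(v)} + D(v)$ with $D(v)\ge 0$, and treat the two parts separately.

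The cherry-only part rests on the clean identity
$$\sum_{v\in\s_n}\binom{X(v)}{k} \;=\; \binom{c(T)}{k}\binom{c(S)}{k}\,z_{\mu(k)},$$
obtained by double-counting pairs $(v,\text{$k$-subset of matched cherries})$: any choice of $k$ cherries in $T$, $k$ cherries in $S$, and a bijection between them is realised by $2^k(n-2k)!$ permutations, and $k!\cdot 2^k(n-2k)! = z_{\mu(k)}$. Expanding $(2x)^{X(v)} = \sum_k\binom{X(v)}{k}(2x-1)^k$ yields $\sum_v(2x)^{X(v)} \sim n!\exp\bigl(2(2x-1)c(T)c(S)/n^2\bigr)$; for trees with $c(T),c(S)\sim n/4$, which account for $\nup_n$-mass $1-O(n^{-1})$ by Chebyshev applied to the cherry variance (cf.~Corollary~\ref{cor:occurrences}), this is $\sim n!\exp((2x-1)/8)$. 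Inserting $\sum_{T,S}|A(T)|^{-1}|A(S)|^{-1} = (C_n/2^{n-1})^2$ and the asymptotic $t_n/n!\sim e^{1/8}(C_n/2^{n-1})^2$ from~\eqref{eq:asymp}, the cherry-only contribution to the PGF tends to $\exp((2x-1)/8)/e^{1/8} = \exp((x-1)/4)$.

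For the $D$ contribution, counting triples $(v,\alpha,\beta)$ with $\alpha v\beta^{-1}=v$ two ways gives $\sum_v|\mathrm{Stab}(v)| = \sum_\lambda z_\lambda|A(T)_\lambda||A(S)_\lambda|$, while the identity above gives $\sum_v 2^{X(v)} = \sum_s z_{\mu(s)}\binom{c(T)}{s}\binom{c(S)}{s}$, so
$$\sum_v D(v) \;=\; \sum_{\lambda\notin R}z_\lambda|A(T)_\lambda||A(S)_\lambda| \;+\; \sum_s z_{\mu(s)}\!\left(|A(T)_{\mu(s)}||A(S)_{\mu(s)}|-\tbinom{c(T)}{s}\tbinom{c(S)}{s}\right).$$
The first sum contributes $O(n^{-1}t_n)$ after outer summation by the estimate quoted at the start of Section~2, and Lemma~\ref{lem:auto_bounds} combined with the approximation $|A(B)_{\mu(s)}| = c(B)^s/s!\,(1+O(s^2/n))$ established in the proof of Lemma~\ref{lem:cherry} bounds the second sum by $O(n^{-1})\sum_v 2^{X(v)}$ for typical trees, with the tail $s>\sqrt n$ handled by~\eqref{eq:est1}--\eqref{eq:est2}. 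This establishes $\nut_n(X_n = k)\to e^{-1/4}(1/4)^k/k!$.

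For the second claim, since $\mathrm{Stab}^{(c)}$ is a subgroup of $\mathrm{Stab}$, Lagrange forces $|\mathrm{Stab}|\ge 2^{X+1}$ whenever $|\mathrm{Stab}|>2^X$; hence $[|\mathrm{Stab}|>2^X]\le 2(1-2^X/|\mathrm{Stab}|)$, and averaging over $\tau$ gives
$$\nut_n(|A(\tau)|>2^{X_n}) \;\le\; \frac{2(t_n-t_n^{(c)})}{t_n} \;=\; O(n^{-1}),$$
with $t_n^{(c)} := \sum_{T,S}\sum_v 2^{X(v)}/(|A(T)|\,|A(S)|)$, the excess being controlled by the same estimate as above. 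Therefore $\nut_n(X_n = k,\,|A(\tau)| = 2^k) = \nut_n(X_n = k) - \nut_n(X_n = k,|A(\tau)|>2^k)$ has the same limit $e^{-1/4}(1/4)^k/k!$. The main technical obstacle throughout is the second-order comparison $|A(T)_{\mu(s)}|\approx\binom{c(T)}{s}$, which quantifies how rarely non-cherry automorphisms survive conjugation by a random matching; fortunately the estimates needed are essentially already present in Lemma~\ref{lem:cherry}.
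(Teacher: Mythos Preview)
Your proof is correct and takes a genuinely different route from the paper's. The paper fixes $k$, uses inclusion--exclusion to count triples $(B_1,B_2,\sigma)$ of two plane binary trees and a matching with \emph{exactly} $k$ matched cherries, weights each triple by $|A(T)|/2^{2(n-1)}$, and then exploits only the lower bound $|A(T)|\ge 2^k$. This gives $\liminf p_{n,k}\ge e^{-1/4}4^{-k}/k!$; since these lower bounds already sum to $1$, they are forced to be exact limits, and the same tightness argument squeezes out the refinement about automorphisms (the proportion $\epsilon_{n,k}$ of triples with extra automorphisms must tend to $0$). Your approach instead computes the full PGF via the stabiliser decomposition $|\mathrm{Stab}(v)|=2^{X(v)}+D(v)$: the cherry part is handled by the clean identity $\sum_v\binom{X(v)}{k}=\binom{c(T)}{k}\binom{c(S)}{k}z_{\mu(k)}$ (a pleasant companion to Lemma~\ref{lem:auto_bounds}), and the excess $D$ is bounded directly using the $\lambda\notin R$ estimate from \cite{billey15} together with the second--order comparison $\binom{c}{s}\le |A_{\mu(s)}|\le\binom{c+s-1}{s}$ from Lemma~\ref{lem:auto_bounds}.

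What each approach buys: the paper's ``liminf sums to $1$'' trick is slick and sidesteps any explicit bound on the non-cherry part of the stabiliser---you never have to show $D$ is small, only that the cherry part already accounts for everything. Your route is more direct and more quantitative: it yields an explicit $O(n^{-1})$ rate for $\nut_n(|A(\tau)|>2^{X_n})$ rather than merely $\epsilon_{n,k}\to 0$, and the PGF framework would adapt readily to joint distributions or moment computations. One small point worth making explicit in your write-up: when passing from $\sum_v D(v)$ to $\sum_v x^{X(v)}D(v)$ you are implicitly using $x^{X(v)}\le 1$ for $x\in[0,1]$; this suffices for convergence in distribution, but should be stated.
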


\begin{proof}
Consider any tanglegram $T$ of size $n$; the group generated by rotations of all $2(n-1)$ internal vertices (by a rotation, we simply mean the action of interchanging the two branches) acts on the set of all possible representations of this tanglegram as a pair of two plane trees with a perfect matching between the leaves. The number of orbits, which is the number of distinct representations of this kind, is $2^{2(n-1)}/|A(T)|$ by the orbit-stabiliser theorem. Conversely, we can construct a tanglegram $T$ from a pair of two plane binary trees $B_1,B_2$ and a perfect matching $\sigma$, to which we assign a weight $|A(T)|/2^{2(n-1)}$. In this way, each distinct tanglegram is counted with a total weight of $1$ if we sum over all choices of $B_1$, $B_2$, and $\sigma$.

Now we use this way of counting tanglegrams as a means to estimate the probability of the event that there are exactly $k$ tangled cherries. Pick two plane binary trees $B_1$ and $B_2$ with $n$ leaves, and suppose that they have $c_1$ and $c_2$ cherries, respectively. Now we count the number of perfect matchings between the leaves of the two trees that generate exactly $k$ matched cherries, where $k$ is a fixed nonnegative integer. By a straightforward application of the inclusion-exclusion principle, this number is
\begin{multline*}
\binom{c_1}{k} \binom{c_2}{k} \cdot k! 2^k \cdot \sum_{\ell \geq 0} (-1)^{\ell} \binom{c_1-k}{\ell} \binom{c_2-k}{\ell} \cdot \ell! 2^{\ell} (n-2k-2\ell)! \\
= \frac{n! 2^k}{k!} \sum_{\ell \geq 0} \frac{(-1)^{\ell} 2^{\ell}}{\ell!}  \frac{\prod_{j=0}^{k+\ell-1} (c_1-j)(c_2-j)}{\prod_{j=0}^{2k+2\ell-1} (n-j)}.
\end{multline*}
The sum is estimated in a similar way as in the proof of part~(\ref{cherry2}) of Lemma~\ref{lem:cherry}. If we assume that $c_1,c_2 \geq n/8$ (which we already know to be the case for most choices of $B_1$ and $B_2$), then for $\ell \leq \sqrt{n}$, we have
$$\frac{\prod_{j=0}^{k+\ell-1} (c_1-j)(c_2-j)}{\prod_{j=0}^{2k+2\ell-1} (n-j)} = \Big( \frac{c_1c_2}{n^2} \Big)^{k+\ell} \Big(1 + O(\ell^2/n) \Big).$$
Moreover, since $c_1,c_2 \leq n/2$, it also follows easily that this fraction is bounded above by $1$. Thus
\begin{align*}
\sum_{\ell \geq 0} \frac{(-1)^{\ell} 2^{\ell}}{\ell!}  &\frac{\prod_{j=0}^{k+\ell-1} (c_1-j)(c_2-j)}{\prod_{j=0}^{2k+2\ell-1} (n-j)}
= \! \sum_{0 \leq \ell \leq \sqrt{n}} \! \frac{(-1)^{\ell} 2^{\ell}}{\ell!} \Big( \frac{c_1c_2}{n^2} \Big)^{k+\ell} \Big(1 + O(\ell^2/n) \Big) + O \Big( \! \sum_{\ell > \sqrt{n}} \frac{2^{\ell}}{\ell!} \Big) \\
&= \sum_{\ell \geq 0} \frac{(-1)^{\ell} 2^{\ell}}{\ell!} \Big( \frac{c_1c_2}{n^2} \Big)^{k+\ell} + O \bigg( n^{-1} \sum_{\ell \geq 0} \frac{\ell^2 2^{\ell}}{\ell!} \Big( \frac{c_1c_2}{n^2} \Big)^{k+\ell} \bigg) + O \Big( \frac{2^{\lceil\sqrt{n}\rceil}}{\lceil\sqrt{n}\,\rceil !} \Big) \\
&= \Big( \frac{c_1c_2}{n^2} \Big)^k \exp \Big( - \frac{2c_1c_2}{n^2} \Big) + O(n^{-1}).
\end{align*}
If either $B_1$ or $B_2$ has fewer than $n/8$ cherries (which happens with probability $O(n^{-1})$ if $B_1,B_2$ are randomly selected), then we can trivially estimate the number of matchings with exactly $k$ matched cherries by $n!$. It follows that the number of triples of two plane binary trees and a matching between their leaves such that there are exactly $k$ matched cherries is
$$C_n^2 \cdot n! \cdot \frac{2^k}{k!} \bigg( \E_n^{(P)} \bigg( \Big( \frac{c(B_1)c(B_2)}{n^2} \Big)^k \exp \Big( - \frac{2c(B_1)c(B_2)}{n^2} \Big) \bigg) + O(n^{-1}) \bigg).$$
Taylor expansion gives us
$$(x_1x_2)^k e^{-2x_1x_2} = 16^{-k} e^{-1/8} \bigg( 1 + \frac{8k-1}{2} \Big( \Big(x_1-{\displaystyle \frac14}\Big) + \Big(x_2 - \frac14 \Big) \Big) + O \Big( \Big(x_1 - \frac14 \Big)^2 + \Big(x_2 - \frac14 \Big)^2 \Big) \bigg).$$
Recall that under $\nup_n$, $c(B_1)$ and $c(B_2)$ are independent with mean $n(n-1)/(4n-6) = n/4 + O(1)$ and variance $O(n)$, so this yields
$$\E_n^{(P)} \bigg( \Big( \frac{c(B_1)c(B_2)}{n^2} \Big)^k \exp \Big( - \frac{2c(B_1)c(B_2)}{n^2} \Big) \bigg) = 16^{-k} e^{-1/8} + O(n^{-1}).$$
Finally we find that there are
\begin{equation}\label{eq:triples}
C_n^2 \cdot n! \cdot \frac{e^{-1/8}}{8^k k!} \big( 1 + O(n^{-1}) \big)
\end{equation}
triples  of two plane binary trees and a matching between their leaves with exactly $k$ matched cherries. For each of them, the size of the automorphism group is at least $2^k$, so the associated weight is at least $2^k/2^{2(n-1)}$. Consequently, the probability that there are exactly $k$ matched cherries is at least
$$\frac{1}{t_n} \cdot C_n^2 \cdot n! \cdot \frac{e^{-1/8}}{8^k k!} \big( 1 + O(n^{-1}) \big) \cdot \frac{2^k}{2^{2(n-1)}} = \frac{e^{-1/4} 4^{-k}}{k!} \big( 1 + O(n^{-1}) \big)$$
in view of~\eqref{eq:asymp}. Letting $p_{n,k}$ denote the probability that a random tanglegram of size $n$ has exactly $k$ matched cherries, we obtain
$$\liminf_{n \to \infty} p_{n,k} \geq  \frac{e^{-1/4} 4^{-k}}{k!}.$$
Since the sum of these lower bounds is already $1$, $\limsup_{n \to \infty} p_{n,k}$ cannot be any greater, so we must have
\begin{equation}\label{eq:limit_prob}
\lim_{n \to \infty} p_{n,k} = \frac{e^{-1/4} 4^{-k}}{k!},
\end{equation}
completing the proof of the first statement. This can now be refined somewhat further: let $\epsilon_{n,k}$ be the proportion of those triples of two plane binary trees and a matching counted by~\eqref{eq:triples} that give rise to at least one automorphism not induced by matched cherries (so that the total number of automorphisms is at least $2^{k+1}$). We find that the total weight of all triples with exactly $k$ matched cherries is at least
\begin{multline*}
C_n^2 \cdot n! \cdot \frac{e^{-1/8}}{8^k k!} \big( 1 + O(n^{-1}) \big) \cdot \Big( (1-\epsilon_{n,k}) \frac{2^k}{2^{2(n-1)}} + \epsilon_{n,k} \frac{2^{k+1}}{2^{2(n-1)}} \Big) \\
= t_n \cdot \frac{e^{-1/4} 4^{-k}}{k!} \big( 1 + O(n^{-1}) \big) \big( 1 + \epsilon_{n,k} \big).
\end{multline*}
In view of~\eqref{eq:limit_prob}, this means that $\limsup_{n \to \infty} \epsilon_{n,k} = 0$. Thus if $\tilde{p}_{n,k}$ is the probability that there are exactly $k$ matched cherries in a random tanglegram of size $n$ and all automorphisms are induced by them, we still have
$$\liminf_{n \to \infty} \tilde{p}_{n,k} \geq  \frac{e^{-1/4} 4^{-k}}{k!},$$
and the claim follows.
\end{proof}

\begin{remark}
Once again, we observe that only those automorphisms generated by cherries are asymptotically relevant (cf.~Lemma~\ref{lem:auto_bounds}).
If every tanglegram had trivial automorphism group, the number of tanglegrams of size $n$ would simply be
$$n! \cdot \Big( \frac{C_n}{2^{n-1}} \Big)^2.$$
Nontrivial automorphisms are thus only responsible for the factor $e^{1/8}$ in the asymptotic formula~\eqref{eq:asymp}, and as the proof of Theorem~\ref{thm:matched_cherries} shows, these nontrivial automorphisms are mostly generated by matched cherries.

The heuristic interpretation can be taken further: as explained earlier, two randomly generated plane binary trees with $n$ leaves will each have about $n/4$ cherries (with high probability). Suppose that the matching between the leaves is unbiased (uniformly random). Then a cherry in the left tree has a probability of approximately 
$$\frac{2 \cdot n/4}{n^2} = \frac{1}{2n}$$
of being matched to a cherry in the right tree. The law of rare events suggests that the distribution of matched cherries should converge to a Poisson distribution with mean
$$\frac{n}{4} \cdot \frac{1}{2n} = \frac18,$$
so the limiting probability that there are exactly $k$ matched cherries should be $\frac{e^{-1/8} 8^{-k}}{k!}$. However, $k$ matched cherries raise the weight by (at least, with high probability exactly) $2^k$, giving a total expected weight of
$$\sum_{k \geq 0} \frac{e^{-1/8} 8^{-k}}{k!} \cdot 2^k = e^{1/8}.$$
This is exactly the factor $e^{1/8}$ in the asymptotic formula for $t_n$. Moreover, because of this induced bias, the limiting probability for $k$ matched cherries is proportional to $\frac{e^{-1/8} 8^{-k}}{k!} \cdot 2^k = \frac{e^{-1/8} 4^{-k}}{k!}$, giving us, as Theorem~\ref{thm:matched_cherries} indeed shows, a Poisson distribution with mean $\frac14$.
\end{remark}

\bibliographystyle{abbrv}
\bibliography{Tanglegrams}

\begin{thebibliography}{10}

\bibitem{billey15}
S.~C. Billey, M.~Konvalinka, and F.~A. Matsen.
\newblock On the enumeration of tanglegrams and tangled chains.
\newblock ArXiv:1507.04976, 2015.

\bibitem{bona09}
M.~B{\'o}na and P.~Flajolet.
\newblock Isomorphism and symmetries in random phylogenetic trees.
\newblock {\em J. Appl. Probab.}, 46(4):1005--1019, 2009.

\bibitem{bukin.etal.2008}
K.~Buchin, M.~Buchin, J.~Byrka, M.~N{\"o}llenburg, Y.~Okamoto, R.~I. Silveira,
  and A.~Wolff.
\newblock Drawing (complete) binary tanglegrams: hardness, approximation,
  fixed-parameter tractability.
\newblock {\em Algorithmica}, 62(1-2):309--332, 2012.

\bibitem{drmota09}
M.~Drmota.
\newblock {\em Random Trees: An Interplay Between Combinatorics and
  Probability}.
\newblock Springer, 1st edition, 2009.

\bibitem{flajolet93}
P.~Flajolet, Z.~Gao, A.~Odlyzko, and B.~Richmond.
\newblock The distribution of heights of binary trees and other simple trees.
\newblock {\em Combin. Probab. Comput.}, 2(2):145--156, 1993.

\bibitem{flajolet82}
P.~Flajolet and A.~Odlyzko.
\newblock The average height of binary trees and other simple trees.
\newblock {\em J. Comput. System Sci.}, 25(2):171--213, 1982.

\bibitem{flajolet09}
P.~Flajolet and R.~Sedgewick.
\newblock {\em Analytic Combinatorics}.
\newblock Cambridge University Press, New York, NY, USA, 1st edition, 2009.

\bibitem{Hafner1988-da}
M.~S. Hafner and S.~A. Nadler.
\newblock Phylogenetic trees support the coevolution of parasites and their
  hosts.
\newblock {\em Nature}, 332(6161):258--259, 1988.

\bibitem{arniePaper}
F.~A. Matsen~IV, S.~C. Billey, A.~Kas, and M.~Konvalinka.
\newblock Tanglegrams: a reduction tool for mathematical phylogenetics.
\newblock arXiv:1507.04784, 2015.

\bibitem{oeis}
{OEIS Foundation Inc.}
\newblock The {O}n-{L}ine {E}ncyclopedia of {I}nteger {S}equences, 2015.

\bibitem{Page.2002}
R.~D. Page.
\newblock {\em Tangled trees : phylogeny, cospeciation, and coevolution / ed.
  by Roderic D.M. Page}.
\newblock Chicago [etc.] : The University of Chicago Press, 2003.

\bibitem{ec2}
R.~P. Stanley.
\newblock {\em Enumerative combinatorics. {V}ol. 2}, volume~62 of {\em
  Cambridge Studies in Advanced Mathematics}.
\newblock Cambridge University Press, Cambridge, 1999.
\newblock With a foreword by Gian-Carlo Rota and appendix 1 by Sergey Fomin.

\end{thebibliography}

\end{document}